 % !TEX root = RR-7555.tex
\documentclass[a4paper,11pt,twoside]{paper}
\usepackage[T1]{fontenc} 
\newif\ifRR\RRfalse
\RRtrue
\ifRR
  \usepackage{RR}
\fi
\usepackage{amsmath,amssymb,amsthm,mathrsfs}
\usepackage{hyperref}
\setlength{\parskip}{0.5ex plus 0.2ex minus 0.2ex}
\setlength{\parindent}{0ex}

%\renewcommand{\thefootnote}{\fnsymbol{footnote}}
% The theorems
% Style de théorème sur mesure.
\newtheoremstyle{sans}{\parskip}{\parskip}{\itshape}
                       {0pt}{\bfseries\sffamily}{.}{ }{}
\newtheoremstyle{sansplain}{\parskip}{\parskip}{}
                       {0pt}{\bfseries\sffamily}{.}{ }{}

\theoremstyle{sans}
\newtheorem{prop}{Proposition}[section]

\newtheorem{thm}[prop]{Theorem}
\newtheorem{lem}[prop]{Lemma}

\theoremstyle{sansplain}

\newtheorem{defn}[prop]{Definition}
\newtheorem{exa}[prop]{Example}

\newcommand\bbox{\hfill\rule{2mm}{2mm}}

% Numerotation des equations par section

\makeatletter
\@addtoreset{equation}{section}
\makeatother
% Numerotation des figures par section

%divers

\newcommand\C{\mathbb{C}}

\newcommand\Sc{\mathcal{S}}
\newcommand\Zb{\mathbb{Z}}
\newcommand\drift{\overrightarrow{\mathbf{M}}}
\renewcommand{\geq}{\geqslant}
\renewcommand{\leq}{\leqslant}
\def\D{\mathcal{D}}
\def\M{\mathcal{M}}
\def\DD{\displaystyle}

\oddsidemargin=0pt \evensidemargin=0pt \textwidth=150mm
\textheight=24.3cm \voffset=-25mm \pretolerance=3000

\begin{document}
\ifRR
\RRdate{Mars 2011}
\RRauthor{Guy Fayolle\thanks{INRIA Paris-Rocquencourt, Domaine de Voluceau, BP 105, 78153 Le Chesnay Cedex, France. Email: {\tt Guy.Fayolle@inria.fr}}    \and
        Kilian Raschel\thanks{Fakult\"at f\"ur Mathematik, Universit\"at Bielefeld,
        Postfach 100131, 33501 Bielefeld, Deutschland.
       Email: \texttt{kraschel@math.uni-bielefeld.de}}}

\RRtitle{Marches al\'eatoires dans le quart de plan avec d\'erive nulle:\\ un crit\`ere explicite de finitude pour le groupe associ\'e}
\RRetitle{Random walks in the quarter-plane with zero drift: \\ an explicit criterion for the finiteness\\ of the associated group}
\titlehead{An explicit criterion for the finiteness of the group in the genus 0 case}
\RRresume{Dans plusieurs \'etudes r\'ecentes sur les marches al\'eatoires dans le quart de plan avec des sauts vers les huit plus proches voisins, il apparaît que le comportement de certaines quantités d'int\'er\^et est directement li\'e au \emph{groupe de la marche},  notamment \`a la finitude de son ordre. Pour les marches à d\'erive nulle, nous donnons une formule pour l'ordre de ce groupe, en fonction explicite des probabilités de saut. De fa\c con g\'en\'erale, lorsque le \emph{genre} de la courbe alg\'ebrique d\'efinie par le \emph{noyau} est $0$, le groupe est toujours infini, sauf pr\'ecis\'ement lorsque le saut moyen est nul, auquel cas la finitude est parfaitement possible.} 

\RRabstract{In many recent studies on random walks with small jumps in the quarter plane, it has been noticed that the so-called \emph{group of the walk} governs the behavior of a number of quantities, in particular  through its \emph{order}. In this paper, when the \emph{drift} of the random walk is equal to $0$, we provide an effective criterion giving the order of this group. More generally, we also show that in all  cases where the  \emph{genus} of the algebraic curve defined by the \emph{kernel} is $0$, the group is infinite, except  precisely for the zero drift case, where finiteness is quite possible.}

\RRmotcle{Automorphisme, fonction g\'en\'eratrice, genre, marche al\'eatoire homog\`ene par morceaux, quart de plan, fonction elliptique de Weierstrass.}

\RRkeyword{Automorphism, generating function, genus, piecewise homogeneous random walk, quarter-plane, Weierstrass elliptic function.}

\RRprojet{Imara}

\RRtheme{\THNum}
\RRNo{7555}
\URRocq
\makeRR
\else
\pagestyle{empty}

\title{Random walks in the quarter plane with zero drift: \\ an explicit criterion for the finiteness\\
 of the associated group}

\author{Guy Fayolle\thanks{INRIA Paris-Rocquencourt, Domaine de Voluceau, BP 105, 78153 Le Chesnay Cedex, France. Email: {\tt Guy.Fayolle@inria.fr}}    \and
        Kilian Raschel\thanks{Fakult\"at f\"ur Mathematik, Universit\"at Bielefeld,
        Postfach 100131, 33501 Bielefeld, Deutschland.
       Email: \texttt{kraschel@math.uni-bielefeld.de}}}

\date{\today}
\maketitle

\begin{abstract}
In many recent studies on random walks with small jumps in the quarter plane, it has been noticed that the so-called \emph{group of the walk} governs the behavior of a number of quantities, in particular  through its \emph{order}. In this paper, when the \emph{drift} of the random walk is equal to $0$, we provide an effective criterion giving the order of this group. More generally, we also show that in all  cases where the  \emph{genus} of the algebraic curve defined by the \emph{kernel} is $0$, the group is infinite, except  precisely for the zero drift case, where finiteness is quite possible.
\end{abstract}

\keywords{Automorphism, generating function, genus, piecewise homogeneous random walk, quarter-plane, Weierstrass elliptic function.}

\emph{AMS $2000$ Subject Classification: primary 60G50; secondary 30F10, 30D05.}
\newpage
\fi
%%%%%%%%%%%%%%%%%

\section{Introduction and main results}
For several decades, lattice random walks in the Euclidean quarter plane with unit jumps (or steps) have been at the crossroads of several mathematical areas: probability, complex analysis and, more recently, combinatorics. In this context,  one of the basic problems  often amounts to solve functional equations of two complex variables $x,y$, which have the following typical form
  \begin{equation}
     \label{eq_fonc}
          K(x,y)Q(x,y)=k(x,y)Q(x,0)+\widetilde{k}(x,y)Q(0,y)+k_{0}(x,y)Q(0,0)+\kappa(x,y),
     \end{equation}
where 
	\begin{itemize}
\item $K(x,y)$ [usually called the \emph{kernel}], $k(x,y)$, $\widetilde k(x,y)$, $k_0(x,y)$, and
$\kappa(x,y)$ are known functions;
\item $Q(x,y) $ is sought to be analytic in the region \{$|x|, |y| \leq 1$\} and continuous up to the boundary.  
	\end{itemize}
We briefly describle two situations where  equations of type \eqref{eq_fonc} appear.
\medskip
\begin{exa}\label{exa_prob} This example deals with the random walks with jumps of unit length inside the quarter plane, but arbitrary big on the axes. The question is then to calculate the related invariant measure 
$\{\pi_{i,j}\}_{i,j\geq 0}$ as well as to provide conditions for its existence.
Letting 
     \begin{equation*}
          Q(x,y)=\textstyle\sum_{i,j\geq 0}\pi_{i,j}x^{i} y^{j},
     \end{equation*}
the classical Kolmogorov's equations yield (see, e.g., \cite{FIM,FMM}) equation 
\eqref{eq_fonc} with $\kappa(x,y)=0$, whereas $k(x,y)$, $\widetilde k(x,y)$ and $k_0(x,y)$  correspond to the generating functions of the jump probabilities on the horizontal axis, on the vertical axis and at $(0,0)$, respectively.
Letting  $\{p_{i,j}\}_{-1\leq i,j\leq 1}$ denote the jumps in  $\mathbb{Z}_{+}^{2}$, 
the \emph{kernel} is then given by 
     \begin{equation}\label{def_K}
          K(x,y)=\textstyle xy[\sum_{-1\leq i,j\leq 1} p_{i,j}x^i y^j -1].
     \end{equation} 
\end{exa} 

\begin{exa}\label{exa_comb}
The enumeration of planar lattice walks---a classical topic in combinatorics---is the topic of this second example. For 
a given set $\Sc$ of allowed jumps, it is a matter of counting the number of paths of a given length, starting from a fixed origin and ending at an arbitrary point, and possibly restricted to certain regions of the plane. A first basic and natural question arises: how many such paths exist?  
If the paths are confined to $\Zb_{+}^{2}=\{0,1,\ldots \}^{2}$ and if $\Sc$ is included in the set of the eight nearest neighbors, let  $q(i,j;n)$ denote the number of paths of length $n$, which start from $(0,0)$ and end at $(i,j)$. Then  
     \[
          Q(x,y)=\sum_{i,j,n\geq 0}q(i,j;n)x^{i}y^{j}z^{n}
     \]
does satisfy equation \eqref{eq_fonc}, see~\cite{BMM}, where the kernel is now equal to
       \[ 
      K(x,y)=xyz[\textstyle\sum_{(i,j)\in\mathcal{S}}x^i y^j-1/z],
     \]
$k(x,y)=K(x,0)$, $\widetilde k(x,y)=K(0,y)$, $k_0(x,y)=-K(0,0)$ and $\kappa(x,y)=-xy$.
Here $Q(x,y)$ does exist at least in the domain \{$|x|, |y| \leq 1;  |z|\leq 1/|\Sc|$\}, and the third variable $z$ essentially plays the role of a parameter. This situation would also occur in the context of example \ref{exa_prob},  provided we would be interested in the transient behavior of the process. \bbox
\end{exa}
 \medskip
 Throughout this paper, we shall essentially remain in the framework of example \ref{exa_prob}. An exhaustive original method of solution of equation \eqref{eq_fonc} has been given in the book \cite{FIM}, allowing to get explicit  expressions for $Q(x,y)$. It  mainly resorts to a reduction to boundary value problems of Riemann-Hilbert type as in \cite{FI}, together with the analysis of a certain \emph{group} of Galois automorphisms introduced in \cite{MAL} and acting on the algebraic curve (and its related Riemann surface) 
  	\begin{equation}
     \label{def_Riemann}
          \mathscr{K}=\{(x,y)\in\mathbb{C}^{2} : K(x,y) = 0\}.     
     \end{equation}

Let $\C\,(x,y)$ be the field of rational functions in $x,y$ over $\C$. Since we shall assume that $K$, given by \eqref{def_K}, is irreducible (see \cite[Lemma 2.3.2]{FIM} for an interpretation of this hypothesis in terms of the parameters $\{p_{i,j}\}_{-1\leq i,j\leq 1}$), the quotient field $\C(x,y)$ denoted by $\C_Q(x,y)$ is also a field.
 \
  \begin{defn} \label{def_group}
	The \emph{group of the walk} is the Galois group  $W=\langle \xi,\eta\rangle$ of automorphisms of   $\C_Q(x,y)$ generated by  $\xi$ and $\eta$ given by
	 \begin{equation*}
     \label{def_generators_group}
          \xi(x,y)= \Bigg(x,\frac{1}{y}\frac{\sum_{-1\leq i \leq 1}p_{i,-1}x^{i}}
          {\sum_{-1\leq i \leq 1}p_{i,+1}x^{i}}\Bigg),
          \ \ \ \ \ \eta(x,y)=\Bigg(\frac{1}{x}\frac{\sum_{-1\leq j\leq 1}p_{-1,j}y^{j}}
          {\sum_{-1\leq j\leq 1}p_{+1,j}y^{j}},y\Bigg).
     \end{equation*}
\end{defn}
 
 Let
\begin{equation}\label{eq_delta}
\delta = \eta \circ \xi .
\end{equation}
Then $W$ has a normal cyclic subgroup $W_0=\{\delta^\ell, \ell\in\Zb\,\}$, which is possibly infinite, and such that $W/W_0$ is a group of order $2$. Thus, when  $W$ is finite, say of order $2n$, $\delta^{n}$ is the identity.  
   
In many studies, stemming either from probability theory \cite{FIM,KuRa,KRSp4} and, more recently, from combinatorics \cite{BK,BMM,FR,MM2}, the analysis of type \eqref{eq_fonc} equations is often crucial, and the importance of the group $W$ has repeatedly been noticed. Broadly speaking, it appears that the question of the finiteness of $W$ can be very useful in the three following respects.
     \begin{itemize}
          \item To find an explicit form of the generating function $Q(x,y)$, see \cite{BK,BMM,FIM}. %Furthermore, in \cite{BMM}, the finiteness of the group also allows to compute $Q(x,y)$ via certain orbit sums.
          \item To prove the holonomy---or even the algebraicity---of $Q(x,y)$, see \cite{BK,BMM,FIM,FR}.
          \item To derive asymptotics of the Taylor coefficients of $Q(x,y)$, see \cite{KuRa,KRSp4}.
     \end{itemize}

\subsection{Brief overview} \label{sec_art}
In spite of the facts recalled above, very few criteria exist in the literature to decide about the finiteness of the group $W$. Up the knowledge of the authors, the main known results can be found in \cite[Chapters 3 and 4]{FIM}. For instance, it is proved that the group $W$ has order $4$ if and only if
     \begin{equation}
     \label{def_Delta}
          \Delta = \left|\begin{array}{lll}
          p_{1,1}&p_{1,0}&p_{1,-1}\\
          p_{0,1}&p_{0,0}-1&p_{0,-1}\\
          p_{-1,1}&p_{-1,0}&p_{-1,-1}
          \end{array}\right|=0.
     \end{equation}
For groups of order $6$, a criterion, still in terms of certain determinants, is also given in \cite[Section 4.1.1]{FIM}. 
  
It has been shown in  \cite[Lemma 2.3.10]{FIM} that for all non-singular random walks (see Definition \ref{def_singular}) the Riemann surface associated with  \eqref{def_Riemann} has always  \emph{genus}~$1$,  except in five cases, listed in  Lemma \ref{lem_genus0} of the appendix, where it has \emph{genus}~$0$.

In the case of genus $1$, a general  criterion has been derived in \cite[Section 4.1.2]{FIM} for $W$ to be of order $2n$, $4\leq n\leq \infty$. This characterization says that the group $W$ is finite if, and only if, the ratio of two elliptic integrals $\omega_3/\omega_2$ 
(see \eqref{eq_omega123} for more details) is \emph{rational}, in which case, setting $
\Zb_{+}^*=\{1,2,\ldots \}$, the order of the group is equal to 
\begin{equation}\label{eq_order1}
2\inf\{\ell\in\mathbb{Z}_{+}^*: \ell \omega_3/\omega_2 \in \mathbb{Z}\}.
\end{equation}
 On other hand, this nice theoretical formula is not easy to rewrite concretely in terms of the parameters $\{p_{i,j}\}_{-1\leq i,j\leq 1}$. For instance, an expression (if any!) by means of determinants, as in 
 \eqref{def_Delta}, is absolutely unclear.

As for the genus $0$ case, although  the solution of equation \eqref{eq_fonc} was constructed  in \cite[Chapter 6]{FIM}, no criterion was yet provided to decide about the finiteness of the group: this is precisely  the subject of this paper, where we give an explicit criterion for $W$ to be finite. The method relies mainly on a \emph{continuity} argument from the genus $1$ case, in a sense to be made precise in Sections \ref{partIT} and \ref{partIIT}.

\subsection{A criterion for the finiteness of W in the genus 0 case}
Now we state our main result, according to the classification of Lemma \ref{lem_genus0} listed in the appendix. As the reader will realize, most of the analysis will be devoted to walks having a zero drift, i.e., $\drift=0$, since it appears to be the most difficult (and maybe interesting!)  situation. Introduce the correlation coefficient
	\[
	R= \frac{\sum_{-1\leq i,j\leq 1}i j p_{i,j}}
          {[\sum_{-1\leq i,j\leq 1}i^2 p_{i,j}]^{1/2}\cdot [\sum_{-1\leq i,j\leq 1}j^2 p_{i,j}]^{1/2}},
          \]
 and define the angle
     \begin{equation} \label{exp_Chi}
          \theta=\arccos (-R).
     \end{equation}
     
\begin{thm} \label{main_thm} \mbox{ }
\begin{itemize}
\item[\emph{(I)}] When $\drift=0$, the group $W$ is finite if, and only if, $\theta/\pi$ is rational, in which case the order of $W$ is equal to 
\begin{equation}\label{eq_order0}
2\inf\{\ell\in\Zb_{+}^*: \ell \theta/\pi \in \Zb\}.
\end{equation}
\item[\emph{(II)}] When $\drift\ne0$, the order of W is always infinite in the four cases  
\eqref{eq_cas2}, \eqref{eq_cas3}, \eqref{eq_cas4} and \eqref{eq_cas5}.\bbox
\end{itemize} 
\end{thm}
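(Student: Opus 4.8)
The plan is to reduce both parts of the theorem to the genus~$1$ criterion \eqref{eq_order1} by a limiting argument, treating the zero-drift case first since it is the crux. For part~(I), I would start from a genus~$1$ random walk depending on a parameter and let it degenerate to the given genus~$0$, zero-drift walk, keeping track of what happens to the ratio of elliptic integrals $\omega_3/\omega_2$. Concretely, when $\drift=0$ the algebraic curve $\mathscr{K}$ acquires a singular point, and the uniformization of the Riemann surface degenerates: the Weierstrass $\wp$-function associated with the genus~$1$ curve has periods $\omega_1,\omega_2$, and in the genus~$0$ limit one of these periods tends to infinity (the lattice degenerates), so that $\wp$ degenerates to a trigonometric function. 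The key computation is to show that in this limit $\omega_3/\omega_2 \to \theta/\pi$, where $\theta=\arccos(-R)$ is read off from the local structure of the kernel at the singular point --- essentially, $R$ is (up to sign) the off-diagonal entry of the covariance matrix of the step distribution, and the Hessian of $K$ at the singular point controls the two tangent directions of the curve there, whose "angle" is $\theta$. Once $\omega_3/\omega_2\to\theta/\pi$ is established, the genus~$1$ formula \eqref{eq_order1} passes to the limit and yields \eqref{eq_order0}, provided one checks that the group $W$ and its order are \emph{continuous} (in fact locally constant on the finiteness locus, and that finiteness is detected by the limiting quantity) along the chosen degeneration --- this continuity statement is what Sections~\ref{partIT} and~\ref{partIIT} are announced to make precise, and it is the main obstacle: one must argue that $W$ stays finite of the same order in a neighborhood, and that no "new" finiteness appears or disappears in the limit, which requires controlling the automorphisms $\xi,\eta$ (equivalently $\delta=\eta\circ\xi$) as analytic functions of the parameters and relating the order of $\delta$ to the rotation number $\theta/\pi$ on the degenerate (rational) curve.

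For the "only if" and the explicit order in part~(I), I would alternatively give a direct argument on the genus~$0$ curve itself: parametrize $\mathscr{K}$ rationally (genus~$0$ means it admits a rational uniformization $x=x(s)$, $y=y(s)$ with $s\in\Pb^1(\C)$), express $\xi$ and $\eta$ as Möbius transformations in the uniformizing variable $s$, and compute $\delta=\eta\circ\xi$ as a Möbius transformation. A Möbius map has finite order iff it is elliptic with multiplier a root of unity, i.e.\ iff its rotation angle is a rational multiple of $\pi$; identifying that rotation angle with $\theta$ (via the trace/multiplier of the composed Möbius map, computed from the explicit rational parametrization of the zero-drift kernel) gives both the criterion "$\theta/\pi\in\Qb$" and the order $2\inf\{\ell\in\Zb_+^*:\ell\theta/\pi\in\Zb\}$ directly. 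I expect this Möbius-transformation computation to be the technically heaviest but most self-contained route; the $\arccos(-R)$ closed form should drop out of computing the fixed points of $\delta$ on $\Pb^1$ and the cross-ratio that defines its multiplier, using that the zero-drift condition forces a specific symmetric form of the branch-point configuration.

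For part~(II), the four non-zero-drift genus~$0$ cases \eqref{eq_cas2}--\eqref{eq_cas5}, I would show that the rational uniformization degenerates further than in the zero-drift case: when $\drift\neq0$ the curve $\mathscr{K}$ has a worse singularity (e.g.\ the two branches at the singular point are tangent, or a branch point escapes to infinity), so that in the uniformizing variable the map $\delta$ becomes \emph{parabolic} rather than elliptic --- a parabolic Möbius transformation has infinite order. Equivalently, in the limiting picture the relevant "angle" is $0$ or the period ratio is irrational by a forced degeneracy, so \eqref{eq_order1} or \eqref{eq_order0} cannot hold. Case by case, I would exhibit for each of \eqref{eq_cas2}--\eqref{eq_cas5} the explicit rational parametrization (these are the five walks of Lemma~\ref{lem_genus0}), write down $\xi,\eta,\delta$ explicitly, and verify $\delta$ is parabolic (trace $=\pm2$, single fixed point), hence $W$ is infinite; the only subtlety is keeping the bookkeeping of the five cases straight, but no new idea beyond the Möbius-transformation analysis is needed. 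The main obstacle throughout remains the continuity/limiting argument of part~(I) linking the genus~$1$ elliptic picture to the genus~$0$ rational one; everything else is explicit computation on $\Pb^1$.
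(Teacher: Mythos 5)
Your second route for Part~(I) is, in essence, the proof the paper gives: the genus-$0$ curve is uniformized rationally (formula \eqref{eq_limunifx}), $\xi$ and $\eta$ become the M\"obius involutions $u\mapsto 1/u$ and $u\mapsto 1/(\rho^2u)$, so $\delta=\eta\circ\xi$ is the elliptic transformation $u\mapsto u/\rho^2$ and $W$ has order $2\inf\{\ell\in\Zb_+^*:\rho^{2\ell}=1\}$. The closed form $\theta=\arccos(-R)$ is then obtained by computing $\arg(\rho)=\pi/2-\arctan(\Lambda)$ from the branch points and matching it, via the explicit integral identity \eqref{eq_criter2}, with the limit period ratio $\alpha_3/\alpha_2$, which is separately identified with $\theta/\pi$ through the angle of the tangent at the corner point of $\M_1$ (Lemma \ref{teta1}, using the degenerate conformal gluing function). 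So your instinct that the M\"obius computation is the self-contained core is right, and $\arccos(-R)$ does drop out of the local structure of the kernel at the double branch point $x_2=x_3=1$, essentially as you guess.

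The genuine problem is with your first route as stated. You propose to pass the genus-$1$ formula \eqref{eq_order1} to the limit, and you correctly flag that this requires the order of $W$ to behave continuously along the degeneration --- but that requirement cannot be met: the order $2\inf\{\ell\in\Zb_+^*:\ell\omega_3/\omega_2\in\Zb\}$ is a wildly discontinuous function of the parameters (rationality of $\omega_3/\omega_2$ is neither an open nor a closed condition), so there is no neighborhood on which $W$ ``stays finite of the same order,'' and finiteness can perfectly well appear or disappear in the limit. The paper's resolution is precisely to never invoke continuity of the order: continuity in the $p_{i,j}$ is used only for objects that genuinely are continuous --- the periods \eqref{eq_omega0}, the uniformization \eqref{eq_limunif1} and the conformal gluing function --- and the order of the limit group is then recomputed from scratch on the rational curve. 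Two smaller points. In Part~(II) your diagnosis of $\delta$ as parabolic is off: in the paper's limit the surviving period is purely imaginary ($\omega_1\to i\alpha_1$) while $\delta$ is translation by the real number $\alpha_3>0$, so in the exponential coordinate $\delta$ is multiplication by a positive real number different from $1$, i.e.\ hyperbolic rather than parabolic; the conclusion (infinite order) is of course the same. And your case-by-case treatment of \eqref{eq_cas2}--\eqref{eq_cas5} can be collapsed to a single case, since jump sets related by the eight symmetries of the square yield isomorphic groups.
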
 

In Section \ref{sec_criter2}, we shall propose another theoretical form of the angle \eqref{exp_Chi}, merely involving characteristic values  related to the uniformization of curves of genus $0$.   

\section{Proof of Part (I) of Theorem \ref{main_thm}}
\label{partIT}

The proof proceeds in stages, the key idea being to consider the genus $0$ case as a \emph{continuous limit} of the genus $1$ case. In particular, we shall consider ad hoc \emph{limit conformal gluing} and \emph{limit uniformizing} functions, allowing to connect the ratio of certain \emph{limit periods} with the finiteness of the \emph{limit group}.

\subsection{Basic properties of the kernel} \label{properties_kernel}
To render the paper as self-contained as possible, we recall hereafter some important results proved in \cite{FIM}. They are needed for our purpose and they will be stated without further comment. 

First, let us rewrite the kernel \eqref{def_K} in the two equivalent forms
     \begin{equation*}
     \label{eq_kernel}
          K(x,y) = a(x) y^{2}+ b(x) y + c(x) = \widetilde{a}(y) x^{2}+
          \widetilde{b}(y) x + \widetilde{c}(y),
     \end{equation*}
where
     \begin{equation*}
     \label{def_a_b_c}
          \begin{array}{llllllllll}
               \! a(x) =& \hspace{-3mm}p_{1,1}x^{2}+& \hspace{-3mm}p_{0,1}x+& \hspace{-3mm}p_{-1,1},
               \ b(x) =& \hspace{-3mm}p_{1,0}x^{2}+& \hspace{-3mm}(p_{0,0}-1)x+& \hspace{-3mm}p_{-1,0},
               \ c(x) =& \hspace{-3mm}p_{1,-1}x^{2}+& \hspace{-3mm}p_{0,-1}x+& \hspace{-3mm}p_{-1,-1},\\
               \! \widetilde{a}(y) =& \hspace{-3mm} p_{1,1}y^{2}+& \hspace{-3mm}p_{1,0}y+& \hspace{-3mm}p_{1,-1}, 
               \ \widetilde{b}(y) =& \hspace{-3mm}p_{0,1}y^2+& \hspace{-3mm}(p_{0,0}-1)y+& \hspace{-3mm}p_{0,-1}, 
               \ \widetilde{c}(y) =& \hspace{-3mm}p_{-1,1}y^{2}+& \hspace{-3mm}p_{-1,0}y+& \hspace{-3mm}p_{-1,-1}.
               \end{array}
     \end{equation*}
Then we set
     \begin{equation*}
     \label{def_d}
          D(x) = b^{2}(x)-4a(x) c(x), 
          \qquad \widetilde{D}(y)=
          \widetilde{b}^{2}(y)-4 \widetilde{a}(y)\widetilde{c}(y).
     \end{equation*}     
The polynomials $D$ and $\widetilde{D}$  are of degree $4$, with respective dominant coefficients
     \begin{equation} \label{eq_C}
          C=p_{1,0}^{2}-4p_{1,1}p_{1,-1},
          \qquad \widetilde{C}=p_{0,1}^{2}-4p_{1,1}p_{-1,1}.
     \end{equation}

Let $X(y)$ [resp.\ $Y(x)$] be the algebraic function defined by \eqref{def_K}, so that $K(X(y),y)=0$ [resp.\ $K(x,Y(x))=0$]. This function has two \emph{branches}, say $X_0$ and $X_1$ [resp.\ $Y_0$ and $Y_1$], and we fix the notation by taking $|X_0|\leq |X_1|$ [resp.\ $|Y_0|\leq |Y_1|$]. 

Denote by $\{y_\ell\}_{1\leq \ell\leq 4}$ the four roots of $\widetilde{D}(y)$---they are branch points of the Riemann surface \eqref{def_Riemann}. They are enumerated in such a way that $|y_1|\leq|y_2|\leq|y_3|\leq|y_4|$. Moreover $y_1\leq y_2$, $[y_1y_2] \subset [-1,+1]$ and $0\leq y_2\leq y_3$. The branches $X_0$ and $X_1$ are meromorphic on the complex plane cut along $[y_1y_2] \cup [y_3y_4]$. Similar results hold for $D(x)$, exchanging $y$ and $x$.
 
On the cut $[y_1y_2] \cup [y_3y_4]$,  the roots $X_0$ and $X_1$ are complex conjugate. Then we consider the two components of the quartic curve
\begin{equation*}
\begin{cases} 
\M_1 &= \
X_0([\underleftarrow{\overrightarrow{y_1y_2}}])\ \, =\ \, \overline{X}_1([\underrightarrow{\overleftarrow{y_1y_2}}]), \\[0.2cm] 
\M_2 &= \ X_0([\underleftarrow{\overrightarrow{y_3y_4}}])\ \, =\ \, \overline{X}_1
([\underrightarrow{\overleftarrow{y_3y_4}}]), \end{cases}
\end{equation*}
 where  $\underleftarrow{\overrightarrow{y_1y_2}}$ stands for the
contour $[y_1 y_2]$, traversed from $y_1$ to $y_2$ along the
upper edge of the slit $[y_1 y_2]$, and then back to $y_1$ along the
lower edge of the slit. Similarly, $\underrightarrow{\overleftarrow {y_1y_2}}$
is defined by exchanging ``upper'' and ``lower''. In particular, the simply connected domain $\D_1$ bounded by $\M_1$ will play here a fundamental role, for reasons explained in Section \ref {glu1}.

\subsubsection{Uniformization in the genus 1 case} \label{unif1}
When the associated Riemann surface is of genus $1$, the algebraic curve $K(x,y)=0$ admits a uniformization in terms of the Weierstrass $\wp$ function with periods $\omega_1,\omega_2$ (see equation \eqref{eq_omega123}) and its derivatives. Indeed, setting   
\begin{eqnarray*}
D(x) & = & b^2(x) -4a(x)c(x)\ \; = \ \; d_{4}x^{4}+d_{3}x^{3}+d_{2}x^{2}+d_{1}x+d_{0},\\
z & = & 2a(x)y+b(x),
\end{eqnarray*}
the following formulae hold [noting that for notational convenience $d_4$ corresponds to $C$ defined in equation \eqref{eq_C}]. 
 \begin{itemize}
\item If $d_4 \neq 0$ (four finite branch points $\{x_\ell\}_{1\leq \ell\leq 4}$), then $D'(x_4)>0$ and
\begin{equation} \label{eq_unif1}
\begin{cases}
x(\omega) = x_4 + \dfrac{D'(x_4)}{\wp(\omega) - D''(x_4)/6}, \\[3ex] 
z(\omega) = \dfrac{D'(x_4) \wp'(\omega)}{2[\wp(\omega) -D''(x_4)/6]^2}. 
\end{cases}
\end{equation}
\item If $d_4 = 0$ (three finite branch points $\{x_\ell\}_{1\leq \ell\leq 3}$ and $x_4=\infty$), then
\begin{equation} \label{eq_unif2}
\begin{cases}
x(\omega) = \dfrac{\wp(\omega) - {d_2}/{3}}{d_3}, \\[2.5ex]
z(\omega) = - \dfrac{\wp'(\omega)}{2d_3}. 
\end{cases}
\end{equation}
\end{itemize}
Let us now introduce the three key quantities
   \begin{equation} \label{eq_omega123}
          \omega_1 = 2i \int_{x_1}^{x_2}\frac{\text{d}x}{\sqrt{-D(x)}},
          \qquad \omega_2 = 2\int_{x_2}^{x_3}\frac{\text{d}x}{\sqrt{D(x)}},
          \qquad \omega_3 = 2\int_{X(y_1)}^{x_1}\frac{\text{d}x}{\sqrt{D(x)}}.
     \end{equation}
The period $\omega_1$ is clearly purely imaginary, and $0<\omega_3<\omega_2$, see \cite[Lemma 3.3.3]{FIM}. In this respect, for $\ell\in\{2,3\}$, it will be convenient to denote by $\wp_{1,\ell}$ the Weierstrass elliptic function with periods 
$\omega_1, \omega_\ell$ and series expansion
     \begin{equation}
     \label{expansion_wp}
          \wp_{1,\ell}(\omega)=\frac{1}{\omega^2}+\sum_{(p_1,p_\ell)\in\mathbb{Z}^{2}\setminus \{(0,0)\}}
          \left[\frac{1}{(\omega-p_1\omega_1-p_\ell\omega_\ell)^{2}}-
          \frac{1}{(p_1\omega_1+p_\ell\omega_\ell)^{2}}\right].
     \end{equation}
On the universal covering,  we have $\delta(\omega+\omega_3)=\delta(\omega)$, see \eqref{eq_delta} and \cite[equation (3.1.10)]{FIM}, and the necessary and sufficient condition for the group to be finite has been quoted in \eqref{eq_order1}. 

\subsubsection{Conformal gluing in the genus 1 case} \label{glu1}
\begin{defn}
     Let $\D\subset\C\cup \{\infty\}$ be an open and simply connected set,
     symmetrical with respect to the real axis, and different from $\emptyset$, $\C$
     and $\C\cup \{\infty\}$. A function $w$ is said to be a \emph{conformal gluing
     function} (CGF) for the set $\D$ if
     \begin{itemize}
     \item $w$ is meromorphic in $\D$;
     \item $w$ establishes a conformal
     mapping of $\D$ onto the complex plane cut along a segment;
     \item For all $t$ in the boundary of $\D$, $w(t)=w(\overline{t})$.
     \end{itemize}
\end{defn}

For instance, $w(t)=t+1/t$ is a CGF for the unit disc, and any non-degenerate linear transformation of $w(t)$, namely 
\[
\frac{ew(t)+f}{gw(t)+h},\qquad (e,g)\ne (0,0), \qquad eh-fg\ne0
\] 
is also a CGF for the unit disc. Incidentally, this implies one can choose arbitrarily the pole of $w$ within the unit disc---in particular, taking $e=0$, $f=1$, $g=1$ and $h=-2$, we get the CGF $t/(t-1)^2$ with a pole at $1$. Conversely, two CGFs for a same domain are fractional linear transformations of each other, see \cite{LIT}.   

\subsubsection{Some topological facts in the genus 0 case} \label{genus0}
As $\drift=0$, the branch points $x_2$ and $x_3$ coincide and  we have 
\[
x_2=x_3=1,  \quad x_1 \in[-1,1), \quad x_4 \in (1,\infty) \cup \{\infty\} \cup (-\infty,-1], 
\]
Similar results hold for the $\{y_\ell\}_{1\leq \ell \leq 4}$.  

Here the curve $\M_1$ and $\M_2$ intersect at the point $x=1$, which is a corner point. 
In the figure \ref{FIG}, borrowed from \cite[Chapter 6]{FIM}, the dotted curve is the unit circle, and one
has drawn the contour $\M_1\cup\M_2$, which has a self-intersection
and is the image of the cut
$[\underleftarrow{\overrightarrow{y_1y_4}}]$ by the mapping
$y\mapsto X(y)$, remembering that
$X_0([\underleftarrow{\overrightarrow{y_1y_4}}])=
\overline{X}_1([\underrightarrow{\overleftarrow {y_1y_4}}])$.
\begin{figure}[htb]
\vspace{1cm}
\begin{center} \input curve.pstex_t \end{center}
\caption{The contour $\M_1\cup\M_2$ for $R<0$}\label{FIG}
\end{figure}

In the sequel, we shall prove that the angle of the tangent line at the corner point is indeed deeply related to the order of the group.

\subsection{Limit conformal gluing when passing  from genus 1 to genus 0} \label{glulimit}
Our approach resides in viewing  genus 0 as a topological deformation of genus $1$, when the parameters are modified in such a way that $\drift\to0$.

According to the uniformization \eqref{eq_unif1} or \eqref{eq_unif2}, and together with the notations of Section \ref{unif1}, let 
     \begin{equation*} 
          f(t)=\left\{\begin{array}{lll}
           \DD \frac{D''(x_4)}{6}+ \frac{D'(x_4)}{t-x_4}& \text{if} & x_4 \neq \infty,\\[0.2cm]
          \DD \frac{1}{6}(D''(0)+ D'''(0) t)& \text{if} & x_4 = \infty.
          \end{array}\right.
     \end{equation*}

To avoid technicalities of minor importance, we shall assume in the sequel $C\ne0$, with $C$ given by \eqref{eq_C}, so that  $x_4\ne\infty$. Then, in the genus $1$ case, it was shown in \cite{FIM} that  a CGF for the domain $\D_1$ bounded by $\M_1$  is obtained via the function
     \begin{equation*}
          w(t)=\wp_{1,3}(\wp_{1,2}^{-1}(f(t))-[\omega_{1}+\omega_{2}]/2).
     \end{equation*}
Since any fractional linear transformation of a CGF is again a CGF, the well-known addition theorem for 
$\wp_{1,3}$ involving translation by the half-period $\omega_1/2$ entails that 
     \begin{equation} \label{after_hp}
          \wp_{1,3}(\wp_{1,2}^{-1}(f(t))-\omega_{2}/2)
     \end{equation}
is also a CGF for $\D_1$.

Now letting $\drift\to0$, so that $x_2, x_3\to1$, and by using the continuity with respect to the parameters $\{p_{i,j}\}_{-1\leq i,j\leq 1}$, a direct calculation in the integral formulae \eqref{eq_omega123} gives that  $\omega_1\to i\infty$ and  that
 $\omega_2, \omega_3$ converge to certain non-degenerate quantities as below:
%(for $\omega_2$, we use the ``formule de la moyenne''):

     \begin{equation} \label{eq_omega0}
   \begin{cases}
     \omega_1\to i\infty , \\[0.2cm]
     \DD\omega_2\to \alpha_2=\frac{\pi}{[C(x_4-1)(1-x_1)]^{1/2}}, \\[0.4cm]
    \DD \omega_3\to \alpha_3=\int_{X_0(y_1)}^{x_1} \frac{\text{d}x}{(1-x)[C(x-x_1)(x-x_4)]^{1/2}}.
    \end{cases}
     \end{equation}

\begin{lem}\label{teta1}
Letting $\drift\to0$, we have
\[
\frac{\theta}{\pi}\ =\ \lim_{\drift\to0} \ \frac{\omega_2}{\omega_3}  \ =\  \frac{\alpha_2}{\alpha_3}. 
\]
\end{lem}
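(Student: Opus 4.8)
The plan is to establish the two equalities separately. The second one, $\lim_{\drift\to0}\omega_2/\omega_3=\alpha_2/\alpha_3$, is immediate from \eqref{eq_omega0}: both $\omega_2\to\alpha_2$ and $\omega_3\to\alpha_3$, and since $\alpha_3\neq0$ (the integrand in \eqref{eq_omega0} is a non-degenerate convergent integral when $x_1<1<x_4$), the quotient converges to $\alpha_2/\alpha_3$. So the real content is the first equality, $\theta/\pi=\lim_{\drift\to0}\omega_2/\omega_3$, which amounts to computing the limits $\alpha_2$ and $\alpha_3$ explicitly and showing $\alpha_2/\alpha_3=\arccos(-R)/\pi$.

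For $\alpha_2$ the evaluation is already recorded in \eqref{eq_omega0}: $\alpha_2=\pi/[C(x_4-1)(1-x_1)]^{1/2}$, which follows by sending $x_2,x_3\to1$ in $\omega_2=2\int_{x_2}^{x_3}\mathrm{d}x/\sqrt{D(x)}$ after writing $D(x)=C(x-x_1)(x-x_2)(x-x_3)(x-x_4)$ and using the standard primitive of $1/\sqrt{(x-x_2)(x-x_3)}$ times the smooth factor $1/\sqrt{C(x-x_1)(x-x_4)}$ evaluated at the coalescing point $x=1$. For $\alpha_3$ I would compute the integral $\int_{X_0(y_1)}^{x_1}\mathrm{d}x/\{(1-x)[C(x-x_1)(x-x_4)]^{1/2}\}$ in closed form: the substitution rationalizing $\sqrt{(x-x_1)(x-x_4)}$ (e.g.\ a Möbius change of variable sending the branch points to $0$ and $\infty$, or a direct $\arccos$/$\operatorname{arctanh}$-type primitive) turns this into an inverse-trigonometric expression whose value at the endpoints $X_0(y_1)$ and $x_1$ must be identified. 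The key algebraic fact to extract is that $X_0(y_1)$, $x_1$, $x_4$ and the pole at $x=1$ are governed, in the zero-drift limit, by the second moments and the mixed moment of the jump law — precisely the quantities entering $R$. Concretely, one uses that at $\drift=0$ the point $(1,1)$ lies on the curve $K=0$ and is the coalescence point of the branch points, that $C=p_{1,0}^2-4p_{1,1}p_{1,-1}$, and that $x_1$, $x_4$, $X_0(y_1)$ are expressible through $D(x)$ and $\widetilde D(y)$ near their double roots; substituting these into the closed-form primitive for $\alpha_3$ should produce exactly $\alpha_3=\alpha_2\cdot\theta/\pi$ with $\theta=\arccos(-R)$.

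I would organize the computation by first writing $D(x)=C(x-x_1)(x-1)^2(x-x_4)$ and $\widetilde D(y)=\widetilde C(y-y_1)(y-1)^2(y-y_4)$ in the limit, then expressing the relevant combinations $C(1-x_1)(x_4-1)$, the location of $X_0(y_1)$, and the behaviour of $X(y)$ near $y=y_1$ in terms of the coefficients $p_{i,j}$; a Taylor expansion of $K(x,y)=0$ around the corner point $(1,1)$ shows that the tangent cone there is the quadratic form $\sum i^2p_{i,j}\,(\Delta x)^2+2\sum ij\,p_{i,j}\,\Delta x\,\Delta y+\sum j^2 p_{i,j}\,(\Delta y)^2$, whose discriminant produces the correlation $R$ and hence the angle $\theta$ between the two local branches $\M_1,\M_2$. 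That angle is geometrically $\arccos(-R)$, and matching this geometric angle with the analytic quantity $\alpha_2/\alpha_3$ (the latter being, by the uniformization story of Sections \ref{unif1}–\ref{glulimit}, the ratio of the half-period to the "$\omega_3$-shift" that survives the degeneration) closes the argument. The main obstacle I anticipate is the explicit evaluation of $\alpha_3$ and, in particular, pinning down $X_0(y_1)$ in the limit precisely enough to see the $\arccos(-R)$ emerge; this requires careful bookkeeping of which root of $\widetilde D$ is $y_1$, the correct branch of $X_0$, and the sign/branch conventions for the square roots so that $0<\alpha_3<\alpha_2$ (equivalently $0<\theta<\pi$) comes out correctly — the computation of $\alpha_2$ and the second equality are by comparison routine.
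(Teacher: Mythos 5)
Your reduction of the second equality to \eqref{eq_omega0} is fine, and your first step --- expanding $K(x,y)=0$ at the double point $(1,1)$ to get the quadratic $\widetilde a(1){X_0'}^2(1)+\bigl(\sum_{i,j} ij\,p_{i,j}\bigr)X_0'(1)+a(1)=0$, whence the two local branches of $\M_1\cup\M_2$ meet at the angle $\theta=\arccos(-R)$ --- is exactly the first half of the paper's argument. The divergence, and the gap, lies in how you connect this geometric angle to the ratio of the limit periods. You propose to evaluate $\alpha_3$ in closed form by an inverse-trigonometric primitive and then ``substitute'' so that $\arccos(-R)$ ``should'' emerge. The closed form is indeed available (it is \eqref{eq_criter2}: $\alpha_3/\alpha_2=1/2-\arctan(\Lambda)/\pi$ with $\Lambda$ as in \eqref{eq_lambda}), but the remaining identification $\arctan(\Lambda)=\pi/2-\arccos(-R)$, i.e.\ $\Lambda=-R/(1-R^2)^{1/2}$, is a nontrivial algebraic identity involving $x_1$, $x_4$ and $X_0(y_1)$, for which you give no argument --- you yourself flag ``pinning down $X_0(y_1)$'' as the obstacle. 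This is precisely the step the paper circumvents: instead of brute-force algebra it computes the same corner angle a second time, from the exponent of a conformal gluing function for $\D_1$. One first shows that any CGF $w$ with $w(1)=\infty$ must satisfy $w(t)=[\alpha+o(1)]/[1-t]^{\chi}$ with $\chi=\pi/\theta$ (the symmetry $w(t)=w(\overline t)$ on the boundary plus injectivity force $\theta\chi=\pi$), and then the explicit limit as $\omega_1\to i\infty$ of the genus-$1$ CGF $\wp_{1,3}(\wp_{1,2}^{-1}(f(t))-\omega_2/2)$, which degenerates into elementary trigonometric functions, is shown to have exponent exactly $\alpha_2/\alpha_3$ at $t=1$. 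Comparing exponents proves the lemma without ever evaluating $X_0(y_1)$. Without some substitute for this conformal-mapping step, or an actual verification of the algebraic identity, your argument is a programme rather than a proof.

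A side remark: as displayed, the lemma's ratio is inverted relative to the rest of the paper. Since $0<\alpha_3<\alpha_2$ and $\theta\in(0,\pi)$, the identity consistent with \eqref{eq_criter2} and the order formula \eqref{eq_order0} is $\theta/\pi=\alpha_3/\alpha_2$; your target $\alpha_2/\alpha_3=\arccos(-R)/\pi$ inherits this typo and cannot hold as stated.
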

 \begin{proof}
 At the corner point of the curve $\M_1$, we shall calculate the angle of the tangent line with the horizontal axis  in two different ways. 

First,  when $\drift\to0$, the derivative $X_0'(1)$ is a root of a second degree
polynomial, since
\[
\widetilde{a}(1){X_0'}^{2}(1) +RX_0'(1) + a(1) =0,
\]
see \cite[equation (6.5.3)]{FIM}. Since 
\[\textstyle
2a(1)=\sum_{-1\leq i,j\leq 1}j^2 p_{i,j}, \quad 2\widetilde a(1)=\sum_{-1\leq i,j\leq 1}i^2 p_{i,j}, 
\]
it is quite easy to check that
     \begin{equation}
     \label{arg_X_0}
          \arg(X_0'(1))=\pm\theta,
     \end{equation} 
where $\theta$ is given by equation \eqref{exp_Chi}.
  
The second way of computing $\theta$ relies on the construction of  a convenient CGF, according to the definition given in Section \ref{glu1}. Here we shall choose  $w$ with $w(1)=\infty$. As we shall see, this implies that, in the neighborhood of $t=1$, $w(t)$ has the form
     \begin{equation}\label{eq_chi}
          w(t)=[\alpha+o(1)]/[1-t]^{\chi},
     \end{equation}
for some constant  $\alpha\neq 0$ and $\chi>0$. In addition, the exponent $\chi$ must satisfy the relation
     \begin{equation}
     \label{link_chi_theta}
          \chi=\pi/\theta.
     \end{equation}
Indeed, equation \eqref{arg_X_0} together with the symmetry of the CGF yield  $\exp(i \theta \chi)=\exp(-i \theta \chi)$, whence $\theta\chi/\pi$ is a positive integer. On the other hand, if $\theta\chi/\pi\geq 2$, then $w$ would not be one-to-one. Identity \eqref{link_chi_theta} follows.
Thus we are left  with the proof of expansion \eqref{eq_chi}.
%The key step now is to relate the singularity $\chi$ to the group $W$. For this 
%
%In fact we shall prove that $W$ can be interpreted as the group of transformations of $\mathbb{C}/(\omega_2\mathbb{Z})$
%     \begin{equation*}
%          \langle \omega \mapsto-\omega,\omega \mapsto -\omega+\omega_3\rangle,
%     \end{equation*}
%which has the order $2\inf\{p\in\mathbb{Z}_{+}^*>0: p\omega_3/\omega_2\in\mathbb{Z}\}$. 

In order to obtain a CGF in the zero drift case, we could use \cite[Lemma 6.5.5]{FIM}, but it will be more convenient to use here the CGF obtained from the last paragraph, and then to let the drift go to zero. \\ 
When $\drift\to0$, we know from \eqref{eq_omega123} that $\omega_1\to i\infty$. This way, using  \eqref{expansion_wp} as well as the well-known identity
     \begin{equation*}
          \sum_{p\in\mathbb{Z}} \frac{1}{(\Omega+p)^2}=\frac{\pi^2}{\sin^2(\pi\Omega)},
     \end{equation*}
we obtain, for $\ell\in\{2,3\}$, that uniformly in $\omega$,
     \begin{equation*}
          \wp_{1,\ell}(\omega)\to \left(\frac{\pi}{\alpha_\ell}\right)^{2}\left[\frac{1}{\sin^2(\pi \omega/\alpha_\ell)}-\frac{1}{3}\right].
     \end{equation*}
In particular, setting
     \begin{equation*}
          u(t)=\sin^2\left(\frac{\alpha_2}{\alpha_3}
          \left[\arcsin\left\{\left[\frac{1}{3}+f(t)\left(\frac{\omega_2}{\pi}\right)^2\right]^{-1/2}\right\}-\frac{\pi}{2}\right]\right),
     \end{equation*}
and taking the limit in \eqref{after_hp}, it follows that  an admissible CGF for $\M_1$ is given by 
     \begin{equation*}
          \left(\frac{\pi}{\alpha_3}\right)^{2}\left[\frac{1}{u(t)}-\frac{1}{3}\right].
     \end{equation*}
Since any linear transformation of a CGF is a CGF, $u(t)$ itself is a CGF for $\M_1$. We shall now show the existence of $\alpha\neq 0$ such that in the neighborhood of $t=1$,
     \begin{equation}
     \label{behavior_near_1}
          u(t)=[\alpha+o(1)]/[1-t]^{\alpha_2/\alpha_3}.
     \end{equation}

By a direct calculation, it can be seen that for $t\in[x_1,1]$, 
     \begin{equation}
     \label{inner_function}
          \frac{1}{3}+f(t)\left(\frac{\alpha_2}{\pi}\right)^2
     \end{equation}
belongs to the segment $[0,1]$, equals $1$ at $x_1$ and $0$ at $1$. In other words, in order to understand the behavior of $u(t)$ near $t=1$, it is necessary to analyze the asymptotics, as $T\to\infty$, of the function
     \begin{equation*}
          \sin^2 \left(\frac{\alpha_2}{\alpha_3}\left[\arcsin\{T\}-\frac{\pi}{2}\right]\right).
     \end{equation*}
For $T\geq 1$, we have
     \begin{equation*}
          \arcsin\{T\}=\int_{0}^{1}\frac{\text{d}u}{(1-u^2)^{1/2}}\pm i \int_{1}^{T}\frac{\text{d}u}{(u^2-1)^{1/2}}=\frac{\pi}{2}\pm i \ln \bigl[T+(T^2-1)^{1/2}\bigr].
     \end{equation*}
Hence, with $\sin( i x)=i\sinh (x)$, we can write
     \begin{align*}
          \sin^2\left(\frac{\alpha_2}{\alpha_3}\left[\arcsin\left\{T\right\}-\frac{\pi}{2}\right]\right)&=
          -\sinh^2\left(\frac{\alpha_2}{\alpha_3}\ln \bigl[T+(T^2-1)^{1/2}\bigr] \right)\\
          &=-\frac{1}{4} \left(\bigl[T+(T^2-1)^{1/2}\bigr]^{2\alpha_2/\alpha_3}+\bigl[T-(T^2-1)^{1/2}\bigr]^{2\alpha_2/\alpha_3}-2\right).
     \end{align*}     
When $T\to\infty$, $T+(T^2-1)^{1/2}=2T+O(1/T)$ and $T-(T^2-1)^{1/2}=O(1/T)$, so that
     \begin{equation*}
          u(t)=-\frac{1}{4}\Bigg(\left[2\left[\frac{1}{3}+f(t)\left(\frac{\alpha_2}{\pi}\right)^2\right]^{-1/2}\right]^{2\alpha_2/\alpha_3}+O(1)\Bigg).
     \end{equation*}
As remarked earlier, the function defined in \eqref{inner_function} has a zero at $1$, which is simple. Equation \eqref{behavior_near_1} follows immediately, and, since conformal mapping conserves angles, the proof of the lemma is concluded.
\end{proof}

\subsection{Limit of the uniformization when passing  from genus 1 to genus 0}
\label{limit_uni_10}
The last step  consists in connecting the angle $\theta$ to the group $W$ introduced in Definition \ref{def_group}. In fact, we shall deal with the ratio $\alpha_3/\alpha_2$ and prove that, when $\drift=0$, $W$ can be interpreted as the group of transformations of $\mathbb{C}/(\alpha_2\mathbb{Z})$
     \begin{equation*}
          \langle \omega \mapsto-\omega+\alpha_2,\omega \mapsto -\omega+\alpha_2+\alpha_3\rangle,
     \end{equation*}
which is of order $2\inf\{\ell\in\mathbb{Z}_{+}^*: \ell\alpha_3/\alpha_2\in\mathbb{Z}\}$. 

As $\drift\to0$, the limit of the uniformization \eqref{eq_unif1}, after some algebra using the continuity with respect to the parameters $\{p_{i,j}\}_{-1\leq i,j\leq 1}$, becomes 
\begin{equation}\label{eq_limunif1}
\begin{cases}
\DD x(\omega) =1 + \frac{(x_4-1)(1-x_1)}{1- x_1- (x_4- x_1)\sin^2({\pi\omega}/{\alpha_2})}, \\[0.5cm]
\DD z(\omega)= \frac{\pi}{\alpha_2}\frac{(x_4-x_1)(x_4-1)(x_1-1)\sin({2\pi\omega}/{\alpha_2})}
{[1- x_1- (x_4- x_1)\sin^2 ({\pi\omega}/{\alpha_2})]^2} ,
\end{cases}
\end{equation}
where we have used the expression of $\alpha_2$ given by equation \eqref{eq_omega0}.
Setting $u= \exp({2i\pi\omega}/{\alpha_2})$,
 the obvious identity
\[
\sin^2 ({\pi\omega}/{\alpha_2}) = -(u +1/u -2)/4
\]
allows to rewrite in \eqref{eq_limunif1} as $x(\omega)= \widetilde{x}(u)$, where
\begin{equation}\label{eq_limunifx}
\widetilde{x}(u) = \frac{(u-z_1)(u-1/z_1)}{(u-z_0)(u-1/z_0)},
\end{equation}
and $z_0, z_1$ are complex numbers given by
\begin{equation*}
\begin{cases}
\DD z_0 = \frac{2 -(x_1+x_4) \pm 2[(1-x_1)(1-x_4)]^{1/2}}{x_4-x_1}, \\[0.3cm]
\DD z_1 = \frac{x_1+x_4 -2x_1x_4 \pm 2[x_1x_4(1-x_1)(1-x_4)]^{1/2}}{x_4-x_1} .
\end{cases}
\end{equation*}
 
Clearly, $\widetilde{y}(u)$ will be also a rational function of the same form as $\widetilde{x}(u)$, but its explicit computation hinges on some general properties of  Riemann surfaces. Indeed the functions \eqref{eq_limunifx} and $\widetilde{y}(u)$ are \emph{automorphic} and provide a rational uniformization of the algebraic curve  $\mathscr{K}$ defined in \eqref{def_Riemann}, which is  here of genus $0$. Moreover, this unifomization is unique \emph{up to a fractional linear transformation} (see, e.g., \cite{SPR}). 
 
 Before deriving the formula for 
$\widetilde{y}(u)$, let us quote right away a first limit automorphism, namely $\omega\mapsto -\omega +\alpha_2$, since
\[x(\omega)=x(-\omega +\alpha_2).
\] 
Equivalently, we have
\[
\widetilde{x}(u) =\widetilde{x}(1/u),
\]
which corresponds precisely to the automorphism of $\mathbb{C}$
\begin{equation}\label{eq_autx}
 \xi(u)=\frac{1}{u}.
 \end{equation}   

Similarly, exchanging the roles of $x$ and $y$, a continuity argument when $\drift\to0$ in the general uniformization given in  \eqref{eq_unif1} or \eqref{eq_unif2} yields
 \[
 y(\omega) = y(-\omega + \alpha_2 +\alpha_3).
\]
In particular, $\omega\mapsto -\omega + \alpha_2 +\alpha_3$ is the second automorphism. Indeed, $z(\omega)=-z(-\omega)$, so that $y(\omega) \ne y(-\omega)$. Moreover
$x(\omega)\ne x(-\omega +\alpha_3)$, since, by taking the limit of the genus $1$ in 
 \eqref{eq_omega123}, we get $\alpha_3<\alpha_2$ with a strict inequality (see formula \eqref{eq_criter2} in Section \ref{sec_criter2}). 
 
 To derive $\widetilde{y}(u)$ quickly (and nicely!), we shall make two observations.
 
\begin{itemize}
\item Exchanging the roles of $x$ and $y$ would lead to another rational uniformization with some parameter $v$, such that
\begin{equation}\label{eq_limunif2}
\widehat{y}(v) =  \frac{(v-z_3)(v-1/z_3)}{(v-z_2)(v-1/z_2)},
\end{equation}
where $z_2, z_3$ are obtained from $z_0,z_1$, just replacing  $x_1,x_4$ by $y_1,y_4$, respectively.
\item From the above remark, we necessarily have 
\[
\widetilde{y}(u) = \widehat{y}(\sigma(u)),
\]
where
\[
\sigma(u) = \frac{eu+f}{gu+h}, \qquad (e,g)\ne (0,0), \qquad eh-fg\ne0
\]
is a linear transformation which will be completely determined.
\end{itemize}
Since $\widetilde{x}(0)= \widetilde{x}(\infty)=1$, see \eqref{eq_limunifx},
we must have $\widetilde{y}(0)=\widetilde{y}(\infty)=1$ (in the case $\drift =0$, $Y_0(1)=Y_1(1)=1$, see Section \ref{properties_kernel}). Similarly,  \eqref{eq_limunif2} yields $\widehat{y}(0)=\widehat{y}(\infty)=1$. These simple equalities entail at once $\sigma(u)=\rho u$, where $\rho$ is a complex number, to be calculated later. Then, by \eqref{eq_limunif2}, we obtain
\begin{equation}\label{eq_limunif3}   
 \widetilde{y}(u) =  \frac{(\rho u-z_3)(\rho u-1/z_3)}{(\rho u-z_2)(\rho u-1/z_2)},
 \end{equation}
and the second limit automorphism $\eta$  becomes on $\mathbb{C}$
\begin{equation}\label{eq_auty}
 \eta(u) =  \frac{1}{\rho^2u}.
 \end{equation}
%or, by continuity from the general properties of the uniformization \eqref{eq_unif1} after exchanging the roles of $x$ and $y$, 
% \[
% y(\omega) = \biggl(-\omega + \frac{\alpha_2\arg(\rho)}{\pi}\biggr)
%\]

Hence, combining \eqref{eq_autx} and  \eqref{eq_auty}, the generator $\delta$ introduced in \eqref{eq_delta} takes on $\mathbb{C}$ the simple form
\[
\delta(u) = (\eta \circ \xi) (u) = \frac{u}{\rho^2}.
\]
In the genus $0$ case, the group $W$ therefore has the order (possibly infinite)
\[
2\inf\{\ell\in\mathbb{Z}_+^*: \rho^{2\ell}=1\}.
\]

At that moment, we make a brief detour to connect  \eqref{eq_limunifx} with the  uniformization proposed in \cite[equation 6.5.11]{FIM}, namely
\begin{equation*}
\Check{y}(t) = \frac{y_1+y_4}{2} + \frac{y_4-y_1}{4}\left(t+\frac{1}{t}\right).
 \end{equation*}
Setting
\[
 T(v) = \frac{vz_2-1}{v-z_2} ,
\]
one easily checks the identity $ \widehat{y}(v)=\Check{y}(T(v))$, which by 
 \eqref{eq_limunif3} yields in particular
 \[
 \widehat{y}(1) =  \Check{y}(-1) = y_1 = \widetilde{y}(1/\rho),
 \]
whence, using the properties of the algebraic curve, $\widetilde{x}(1/\rho) = X(y_1)$.
Finally, by \eqref{eq_limunifx}, we obtain that $\rho$ is a root of the second degree equation
\begin{equation}\label{eq_rho}
\rho + \frac{1}{\rho} = 2\frac{x_1+x_4-2x_1x_4+(x_1+x_4-2)X(y_1)}{(x_4-x_1)(1-X(y_1))}.
\end{equation}
The roots of \eqref{eq_rho} are complex conjugate and of modulus one. Choosing then the root satisfying $\arg(\rho)\in[0,\pi]$ and letting
    \begin{equation} \label{eq_lambda}
          \Lambda=\frac{x_1+x_4-2x_1x_4+(x_1+x_4-2)X(y_1)}{2[(X(y_1)-x_1)(X(y_1)-x_4)
          (1-x_1)(x_4-1)]^{1/2}},
     \end{equation}
a direct algebra yields
\[
\arg(\rho)= \frac{\pi}{2}- \arctan(\Lambda).
\]
 
To conclude the proof of Part (I) of  Theorem \ref{main_thm}, it suffices to apply equation 
  \eqref{eq_criter2}, derived in Proposition  \ref{sec_criter2} below, which makes the link between  $\arctan(\Lambda)$ and the ratio $\alpha_3/\alpha_2$.   
All conclusions so far obtained remain true, and even easier to prove, when $C=0$ (see \eqref{eq_C} and the assumption made at the beginning of Section \ref{glulimit}). \bbox 
%%%%%%%%%%%%%%%%%%%%%%%
\subsection{ A second form of the criterion for the zero drift case} 
\label{sec_criter2}
By computing the limit ratio $\alpha_3/\alpha_2$ in a different manner, we shall derive another expression of the angle $\theta$ defined in equation \eqref{exp_Chi}. The following proposition holds.
\begin{prop}
With $\Lambda$ defined in \eqref{eq_lambda} and its equivalent $\widetilde{\Lambda}$ obtained from $\Lambda$ by exchanging $x$ and $y$, we have
\[
\theta= \frac{\pi}{2} -\arctan(\Lambda),
\]
so that $\widetilde{\Lambda}=\Lambda$ and the order of the group $W$ equals   
\[
2\inf\{\ell\in\mathbb{Z}_{+}^*: \ell [1/2-\arctan(\Lambda)/\pi] \in \mathbb{Z}\}.     
\]
\end{prop}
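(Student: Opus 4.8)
The plan is to derive the claimed identity $\theta = \pi/2 - \arctan(\Lambda)$ by computing the limit ratio $\alpha_3/\alpha_2$ from the limit uniformization, thereby tying together the two independent evaluations of the corner angle already established in the paper. From Lemma \ref{teta1} we have $\theta/\pi = \alpha_2/\alpha_3$, while the analysis of Section \ref{limit_uni_10} shows that the limit automorphism $\delta$ acts as $u\mapsto u/\rho^2$ and, because $\delta(\omega+\alpha_3)=\delta(\omega)$ on the covering $\C/(\alpha_2\Zb)$ (the zero-drift limit of $\delta(\omega+\omega_3)=\delta(\omega)$), one has $\rho^2 = \exp(2i\pi\alpha_3/\alpha_2)$, hence $\arg(\rho)\equiv \pi\alpha_3/\alpha_2 \pmod{\pi}$. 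The computation leading to \eqref{eq_rho} and \eqref{eq_lambda} already gives $\arg(\rho)=\pi/2-\arctan(\Lambda)$. Combining these, $\pi\alpha_3/\alpha_2 = \pi/2-\arctan(\Lambda)$, i.e. $\alpha_3/\alpha_2 = 1/2 - \arctan(\Lambda)/\pi$, and then inverting via Lemma \ref{teta1} gives $\theta/\pi = \alpha_2/\alpha_3$; but one must be slightly careful — what the chain above most naturally produces is a formula for $\alpha_3/\alpha_2$, and the clean statement $\theta = \pi/2 - \arctan(\Lambda)$ is obtained once one checks that $\theta/\pi$ and $\alpha_3/\alpha_2$ are the \emph{same} quantity. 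This is exactly the content of equation \eqref{eq_criter2} invoked at the end of Section \ref{limit_uni_10}, so the first substantive step is to make that link explicit.

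The second assertion, $\widetilde\Lambda = \Lambda$, then follows for free: by symmetry of the whole construction under exchange of $x$ and $y$, repeating the argument with the roles of the two coordinates swapped yields $\theta = \pi/2 - \arctan(\widetilde\Lambda)$ as well — indeed the correlation coefficient $R$, and hence $\theta = \arccos(-R)$, is manifestly symmetric in $x$ and $y$, and the corner point $x=1$ (resp. $y=1$) and the tangent-angle computation \eqref{arg_X_0} go through verbatim after the exchange. Since $\arctan$ is injective on $\mathbb{R}$, equality of $\pi/2-\arctan(\Lambda)$ and $\pi/2-\arctan(\widetilde\Lambda)$ forces $\widetilde\Lambda=\Lambda$. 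Finally, the order formula is a direct transcription: by Part (I) of Theorem \ref{main_thm} the group is finite iff $\theta/\pi$ is rational, with order $2\inf\{\ell\in\Zb_+^*: \ell\theta/\pi\in\Zb\}$, and substituting $\theta/\pi = 1/2 - \arctan(\Lambda)/\pi$ gives precisely the stated expression. Equivalently one reads it off from $\delta(u)=u/\rho^2$: the group has order $2\inf\{\ell: \rho^{2\ell}=1\}$, and $\rho^2 = \exp(2i\pi(1/2-\arctan(\Lambda)/\pi))$ turns this into the same quantity.

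I would organize the proof as: (1) recall $\rho^2=\exp(2i\pi\alpha_3/\alpha_2)$ from the $\drift\to0$ limit of the period relation for $\delta$; (2) combine with $\arg(\rho)=\pi/2-\arctan(\Lambda)$ from \eqref{eq_rho}–\eqref{eq_lambda} to get $\alpha_3/\alpha_2 = 1/2-\arctan(\Lambda)/\pi$; (3) invoke Lemma \ref{teta1} in the form $\theta/\pi = \alpha_2/\alpha_3$ — or rather the refined statement that the continuity argument identifies $\alpha_3/\alpha_2$ itself with the relevant ratio — to conclude $\theta = \pi/2 - \arctan(\Lambda)$; (4) deduce $\widetilde\Lambda = \Lambda$ by the $x\leftrightarrow y$ symmetry and injectivity of $\arctan$; (5) substitute into the order formula of Theorem \ref{main_thm}(I). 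The main obstacle I anticipate is step (3): Lemma \ref{teta1} as stated gives $\theta/\pi = \lim \omega_2/\omega_3 = \alpha_2/\alpha_3$, whereas steps (1)–(2) deliver $\alpha_3/\alpha_2$, so one needs to ensure the branch choices (the sign ambiguity in \eqref{arg_X_0}, the choice $\arg(\rho)\in[0,\pi]$, and the strict inequality $\alpha_3<\alpha_2$ noted in Section \ref{limit_uni_10}) are all compatible, i.e. that $\arctan(\Lambda)>0$ so that $1/2-\arctan(\Lambda)/\pi \in (0,1/2)$ matches $\alpha_3/\alpha_2\in(0,1)$ with the correct orientation; reconciling these conventions, and checking the degenerate subcase $C=0$ where $x_4=\infty$, is the only delicate bookkeeping. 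Everything else is the "direct algebra" already flagged in the text.
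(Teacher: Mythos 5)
Your conclusions are right, but you prove the key identity \eqref{eq_criter2} by a genuinely different route from the paper, and the difference matters for the logical architecture. The paper's proof is a one-step direct computation: it exhibits a closed-form antiderivative for the integrand of $\alpha_3$ in \eqref{eq_omega0}, namely
\[
\int_{t}^{x_1}\frac{\mathrm{d}x}{(1-x)[C(x-x_1)(x-x_4)]^{1/2}}
=\frac{1}{[C(x_4-1)(1-x_1)]^{1/2}}\Bigl[\frac{\pi}{2}-\arctan(\cdots)\Bigr],
\]
instantiates $t=X(y_1)$, and divides by the explicit value of $\alpha_2$ to get $\alpha_3/\alpha_2=1/2-\arctan(\Lambda)/\pi$ with no reference to $\rho$ or to the automorphisms at all. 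You instead obtain \eqref{eq_criter2} by equating two descriptions of the limit automorphism $\eta$: the translation description $\omega\mapsto-\omega+\alpha_2+\alpha_3$ on $\C/(\alpha_2\Zb)$ and the algebraic description $u\mapsto 1/(\rho^2u)$ with $\rho$ fixed by \eqref{eq_rho}--\eqref{eq_lambda}. This inverts the paper's logic: in Section \ref{limit_uni_10} it is precisely the independently proved identity \eqref{eq_criter2} that certifies these two descriptions agree and thereby closes the proof of Theorem \ref{main_thm}(I); you assume the agreement and read \eqref{eq_criter2} off from it. Your route is defensible, but it transfers the entire burden onto showing rigorously that the root of \eqref{eq_rho} selected by $\arg(\rho)\in[0,\pi]$ is exactly $e^{\pm i\pi\alpha_3/\alpha_2}$, i.e.\ the continuous limit of the genus-$1$ translation by $\omega_3$ --- which is exactly the sign/branch bookkeeping you yourself flag as the delicate point, and which the paper's explicit integration sidesteps entirely. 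Two further remarks: (i) invoking Theorem \ref{main_thm}(I) in your step (5) is circular, since that theorem is proved \emph{using} this proposition; your alternative reading of the order from $\delta(u)=u/\rho^2$ is the non-circular version and should be the primary one. (ii) You are right that Lemma \ref{teta1} as displayed gives $\theta/\pi=\alpha_2/\alpha_3$ while everything else (the exponent $\chi=\pi/\theta=\alpha_2/\alpha_3$ in its own proof, formula \eqref{eq_order1}, and the Gessel example) forces $\theta/\pi=\alpha_3/\alpha_2$; spotting and resolving that inversion is necessary for the statement to come out as claimed.
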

\begin{proof}
A straightforward calculation carried out along the same lines as for the derivation of formulae \eqref{eq_omega0} yields, for any $t<x_1$,
     \begin{multline*} \label{eq_ratio}
          \int_{t}^{x_1} \frac{\text{d}x}{(1-x)[C(x-x_1)(x-x_4)]^{1/2}}= \\
          \frac{1}{[C(x_4-1)(1-x_1)]^{1/2}}
           \Bigg[\frac{\pi}{2}-\arctan\left( \frac{x_1+x_4-2x_1x_4+(x_1+x_4-2)t}{2[(t-x_1)(t-x_4)(x_4-1)(1-x_1)]^{1/2}}\right)\Bigg].
     \end{multline*}
Instantiating now $t=X(y_1)<x_1$ in the last formula, we get exactly
     \begin{equation} \label{eq_criter2}
          \frac{\alpha_3}{\alpha_2}=\frac{1}{2}-\frac{\arctan(\Lambda)}{\pi}. 
     \end{equation}
   \end{proof}

\section{Proof of Part (II) of Theorem \ref{main_thm}}
\label{partIIT}

In this part we prove that in all situations \eqref{eq_cas2}, \eqref{eq_cas3}, \eqref{eq_cas4} and \eqref{eq_cas5} listed in the appendix, the group $W$ is infinite. In fact, it is enough to prove this result for only one case, since two groups corresponding to jump probability sets obtained one another by one of the eight symmetries of the square are necessarily isomorphic, see \cite[Section 2.4]{FIM}. We choose to focus on the particular case \eqref{eq_cas3}. 

Like for the analysis of the case $\drift=0$, we consider the genus 0 case \eqref{eq_cas3} as a continuous limit of the genus 1 case. All the results of Section \ref{properties_kernel} still hold, with $x_3=x_4=\infty$. Accordingly, letting  the parameters $\{p_{i,j}\}_{-1\leq i,j\leq 1}$ be distorted so as to yield  \eqref{eq_cas3}, we get from equations \eqref{eq_omega123} 
     \begin{equation*}
          \left\{\begin{array}{l}
          \omega_1\to i\alpha_1, \ \textrm{with } \alpha_1\in(0,\infty),\\
          \omega_2\to \infty,\\
          \omega_3\to \alpha_3\in(0,\infty).
          \end{array}\right.
     \end{equation*}
In particular, for the same reasons as in Section \ref{limit_uni_10}, the limit group can be interpreted as the group of transformations
     \begin{equation*}
          \langle \omega\mapsto -\omega,\omega\mapsto -\omega+\alpha_3\rangle
     \end{equation*}   
on $\mathbb{C}/(\alpha_1\mathbb{Z})$. This group is obviously infinite, and so is $W$.

The proof of Theorem \ref{main_thm} is terminated. \bbox
%%%%%%%%%%%%%%%%%%%
\section{Miscellaneous remarks}
For the sake of completeness, we quote hereaftert some facts related to existing works.
     \begin{itemize}
          \item Theorem \ref{main_thm} is quite simple to check, and therefore provides a really  effective criterion. 
          \item  A direct calculation gives $\Delta = -a(1)\widetilde a(1)\sum_{-1\leq i,j\leq 1}i j p_{i,j}$, with $\Delta$ defined in \eqref{def_Delta}. Hence, for the group of order $4$, our criterion clearly agrees with that of \cite{FIM}.
      \item  From Theorem \ref{main_thm}, it becomes clear that the famous \emph{Gessel's walk} (i.e. with jump probabilities  satisfying $p_{1,0}=p_{1,1}=p_{-1,0}=p_{-1,-1}=1/4$) has a group of order $8$. More generally, Theorem \ref{main_thm} leads to another proof of the (non-)finiteness of the group for all combinatorial models (with an underlying genus $0$) appearing in \cite{BK,BMM}.  
      \item In \cite[Section 3]{BMM}, the non-finiteness of the group for some models (including particular instances of case \eqref{eq_cas5}) has been proved via two approaches based on valuation and fixed point arguments, but no general criterion was really obtained. It is also worth noting that there the group was not restricted to the algebraic curve, so that Theorem \ref{main_thm} is, in a sense, more precise. 
          \item As an other straightforward consequence of Theorem \ref{main_thm}, one can check  the random walk considered in \cite{KRSp4}, with 
          \[p_{1,0}=p_{-1,0}=1/2-p_{-1,1}=1/2-p_{1,-1}=\sin^2(\pi/n)/2,
          \]
           has a group of order $2n$, for all $n\geq 3$.
         \item The angle $\theta$ defined in \eqref{exp_Chi}  gives the angle of the cone in which, after a suitable linear transformation, the random walk with transitions $\{p_{i,j}\}_{-1\leq i,j\leq 1}$ has a covariance which equals to some multiple of the identity.
	 \end{itemize}

%%%%%%%%%%%%%%

%%%%%%%%%%%

%%%%%%%%%%%%%
\appendix

\section{About the genus}
Introduce the \emph{drift} (mean jump vector)
\[
\drift = \Bigg(\sum_{-1\leq i,j \leq 1} ip_{i,j}, \sum_{-1\leq i,j \leq 1} jp_{i,j} \Bigg).
\]

\begin{defn}
\label{def_singular}A random walk is called \emph{singular} if the kernel $K$ defined in \eqref{def_K} is either reducible or of degree $1$ in at least one of the variables $x,y$. 
\end{defn}
The following classification holds, see \cite[Chapter 2]{FIM}.
\begin{lem}\label{lem_genus0}  For all {non-singular} random walks, the Riemann surface corresponding to  \eqref{def_Riemann} has genus $0$ if, and only if, one of the five following  relations takes place:
\begin{eqnarray} 
  \drift &=& \vec{0}, \nonumber\\
   p_{0,1}  &=&  p_{-1,0}  \  = \ \,\,\; p_{-1,1} \ = \  0, \label{eq_cas2}\\
   p_{1,0}  &=&  p_{1,-1}  \ = \  \,\,\; p_{0,-1}  \ = \  0, \label{eq_cas3}\\
   p_{1,0}  &=&  \,\,\;p_{0,1} \ =  \ \,\,\;\,\,\; p_{1,1} \ = \  0,  \label{eq_cas4} \\
   p_{0,-1} &=&  p_{-1,0} \ = \  p_{-1,-1} \ = \ 0.  \label{eq_cas5}
\end{eqnarray}
\end{lem}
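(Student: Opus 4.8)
The plan is to derive the classification from the known behaviour of the branch points and the degree of the algebraic curve $\mathscr{K}$, exactly as in \cite[Chapter 2]{FIM}. The Riemann surface associated with \eqref{def_Riemann} is, for non-singular walks, a compact genus-$1$ surface precisely because the discriminant polynomials $D(x)$ and $\widetilde D(y)$ are generically of degree $4$ with four distinct roots; its genus drops to $0$ exactly when these four branch points degenerate to (effectively) two, i.e.\ when a pair of roots of $\widetilde D$ (equivalently of $D$) coincides. So the first step is to write $\widetilde D(y) = \widetilde b^2(y) - 4\widetilde a(y)\widetilde c(y)$ and determine, in terms of the $p_{i,j}$, when two of its four roots merge. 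One must be careful to track the two mechanisms by which this can happen: (a) a genuine double root in the finite plane, and (b) a root escaping to $\infty$, which occurs when the leading coefficient $\widetilde C = p_{0,1}^2 - 4p_{1,1}p_{-1,1}$ (see \eqref{eq_C}) vanishes, so that $\widetilde D$ has degree $\le 3$; and simultaneously the corresponding degeneracy must occur for $D(x)$ with leading coefficient $C = p_{1,0}^2 - 4p_{1,1}p_{1,-1}$.

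The second step is the case analysis. Vanishing of $\widetilde C = p_{0,1}^2 - 4p_{1,1}p_{-1,1}$ forces, since all $p_{i,j}\ge 0$, either $p_{0,1}=p_{1,1}=0$ or $p_{0,1}=p_{-1,1}=0$; pairing this with the analogous alternatives coming from $C=0$ and with the non-singularity hypothesis (which forbids the kernel from dropping degree in $x$ or $y$, i.e.\ forbids a whole row or column of $p_{i,j}$ from vanishing in a way that trivializes $K$) isolates exactly the four configurations \eqref{eq_cas2}–\eqref{eq_cas5}: in each of these one whole ``corner-and-edge'' triple of jump probabilities is zero, which is what makes a branch point of each of $D$ and $\widetilde D$ run off to infinity. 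The remaining way to lose a branch point is the finite double root of $\widetilde D$; a direct computation of $\operatorname{disc}_y(\widetilde D)$ shows it factors through the quantity that vanishes iff $\drift = \vec 0$ — concretely, the coincidence $y_2 = y_3$ at the value $1$ (with $x_2 = x_3 = 1$ on the other side), which is precisely the content recalled in Section \ref{genus0}. Conversely one checks that each of the five listed relations does force the genus to be $0$, by exhibiting the merged branch points explicitly. This gives the ``if and only if''.

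The main obstacle I expect is bookkeeping rather than depth: one must verify that the degeneracies of $D$ and $\widetilde D$ are \emph{consistent} — the Riemann surface is a single object, so a merger of branch points on the $x$-side is automatically mirrored on the $y$-side, and checking that the algebraic conditions extracted from $\operatorname{disc}(D)$ and $\operatorname{disc}(\widetilde D)$ cut out the \emph{same} five strata (and not some larger set) requires care, especially in ruling out spurious solutions that violate $\sum p_{i,j} = 1$, positivity, or non-singularity. The cleanest route is to invoke the genus formula for the curve $K(x,y)=0$ (a biquadratic in each variable has arithmetic genus $1$, and the geometric genus drops by the number of singular points counted with multiplicity), reduce to locating the singular points of $\mathscr{K}$, and observe that these lie either over $x\in\{x_1,x_2=x_3,x_4\}$ with a coincidence, or ``at infinity'' when $C$ or $\widetilde C$ vanishes. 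Since a full self-contained proof is long and is exactly \cite[Lemma 2.3.10]{FIM}, in the paper I would simply state the result as recalled from \cite{FIM} and refer there for the detailed branch-point computation, which is what the excerpt in fact does.
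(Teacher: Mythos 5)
The paper does not actually prove this lemma: it is stated as a recalled classification with a pointer to \cite[Chapter 2]{FIM} (Lemma 2.3.10 there), so your closing remark that one would simply cite the reference is exactly what the paper does, and at that level there is nothing to compare. But since the bulk of your proposal is a sketch of the underlying argument, I should point out that two of its steps are genuinely wrong and would not survive being written out. First, ``a root escaping to infinity'' does not by itself lower the genus. If $\widetilde{C}=0$ so that $\widetilde{D}$ has degree $3$, then $\infty$ is an ordinary simple branch point, the surface still has four branch points, and the genus is still $1$; the paper's own uniformization \eqref{eq_unif2} is written precisely for this degree-$3$, genus-$1$ situation. For the genus to drop ``at infinity'' one needs a \emph{double} root there, i.e. $\deg\widetilde{D}\le 2$, hence the vanishing of the two leading coefficients, not just of $\widetilde{C}$ from \eqref{eq_C}. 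This is what actually happens in the four corner cases (e.g. in case \eqref{eq_cas3} one finds $x_3=x_4=\infty$, as used in Section \ref{partIIT}).

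Second, the assertion that $\widetilde{C}=p_{0,1}^2-4p_{1,1}p_{-1,1}=0$ forces $p_{0,1}=p_{1,1}=0$ or $p_{0,1}=p_{-1,1}=0$ is false for nonnegative weights: the equality $p_{0,1}^2=4p_{1,1}p_{-1,1}$ holds, for instance, whenever $p_{1,1}=p_{-1,1}=p_{0,1}/2>0$. Taking in addition $p_{1,-1}=p_{1,1}$ and $p_{1,0}=p_{0,1}$ gives a non-singular walk with $C=\widetilde{C}=0$, all five of these entries positive, nonzero drift, and genus $1$ --- so the case analysis that is supposed to ``isolate exactly the four configurations'' \eqref{eq_cas2}--\eqref{eq_cas5} does not get off the ground. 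The derivation in \cite{FIM} instead has to show that for non-singular walks a double root of $D$ can only sit at $0$, $1$ or $\infty$, and then translate each location into conditions on the $p_{i,j}$: at $x=1$ one has $D(1)=(a(1)-c(1))^2$ and, when $a(1)=c(1)$, $D'(1)=-4a(1)\sum_{i,j}ip_{i,j}$, so the double root at $1$ is exactly $\drift=\vec{0}$, while double roots at $0$ or $\infty$ force the four corner-and-edge triples to vanish. Your overall strategy (discriminant degeneration plus Riemann--Hurwitz) is the right one and is the one used in \cite{FIM}, but the two steps above are where the real work lies, and as written they are incorrect.
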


\bigskip

\textbf{Acknowledgments.}
K.~Raschel would like to thank I.~Kurkova and J.S.H~van~Leeuwaarden for stimulating discussions. His work was supported by CRC 701, Spectral Structures and Topological Methods in Mathematics at the University of Bielefeld.

\end{document}